\newtheorem{Theorem}{Theorem}[section]
\newtheorem{Lemma}{Lemma}[section]
\newproof{proof}{Proof}
\newproof{pot}{Proof of Theorem \ref{thm2}}
\newcommand{\bb}{\begin{bmatrix}}
\newcommand{\eb}{\end{bmatrix}}
\newcommand{\bl}[1]{\begin{list}{#1}{\usecounter{bean}}} \newcommand{\el}{\end{list}}
\newcommand{\bel}[1]{\begin{equation} \label{#1}} \newcommand{\eel}{\end{equation}}
\begin{document}

\date{}
\begin{frontmatter}
\title{
On the $\top$-Stein equation $X=AX^\top B+C$
}%,t2}}
\author{Matthew M. Lin \fnref{fn2}}
\ead{mlin@math.ccu.edu.tw}
\address{Department of Mathematics, National Chung Cheng University, Chia-Yi 621, Taiwan.}
\author{Chun-Yueh Chiang\corref{cor1}\fnref{fn1}}
\ead{chiang@nfu.edu.tw}
\address{Center for General Education, National Formosa
University, Huwei 632, Taiwan.}

\cortext[cor1]{Corresponding author}
\fntext[fn2]{The first
author was supported by the National Science Council of Taiwan under grant
101-2115-M-194-007-MY3.}
\fntext[fn1]{The second
author was supported  by the National Science Council of Taiwan under grant
NSC100-2115-M-150-001.}

\date{ }

\begin{abstract}
We consider the $\top$-Stein equation
$X = AX^\top B + C$, where the operator $(\cdot)^\top$ denotes 
the transpose ($\top$) of a matrix.
In the first part of this paper, we analyze necessary and sufficient conditions for the existence and uniqueness of the solution $X$.
In the second part, a numerical algorithm for solving $\top$-Stein equation is  given under the solvability conditions.
\end{abstract}

\begin{keyword}
Sylvester equation, Stein equation, PQZ decomposition
\end{keyword}

\end{frontmatter}

\section{Introduction}

The purpose of this article is to analyze the solvability conditions
of the $\top$-Stein equation
\begin{equation}\label{DTS}
X = AX^\top B+C,
\end{equation}
where $A,\, B,\, C \in\mathbb{C}^{m\times n}$ are known matrices, $X
\in \mathbb{C}^{m\times n}$ is a matrix to be determined.
Our interest in the $\top$-Stein equation originates from
the study of completely integrable mechanical systems, that is, the
analysis of the $\top$-Sylvester equation
\begin{equation}\label{CTS}
A X + X^\top B = C,
\end{equation}
where $A$, $B$, $X$ are matrices in $\mathbb{R}^{m\times n}$~\cite{Braden98, Ma2003}.
By means of the generalized inverses
or QZ decomposition~\cite{Bojanczyk1992}, the solvability conditions of~\eqref{CTS} are studies in~\cite{Braden98, Ma2003, Chiang2012}.
Suppose that $(A,B^\top)$ is regular, that is,
$ a A + b  B^\top$ is invertible for some scalars $a$ and $b$.
The $\top$-Sylvester equation~\eqref{CTS} can be written as\begin{equation}\label{add}
(aA + b B^\top)X + X^\top(aB+bA^\top) = aC+ b C^\top.
\end{equation}
Premultiplying both sides of~\eqref{add} by $(aA + b B^\top)^{-1}$, we have
\begin{equation}
X + UX^\top V = W,
\end{equation}
where $U = (aA + b B^\top)^{-1}$, $V = aB+bA^\top$ and
$ W =  (aA + b B^\top)^{-1}(aC+ b C^\top)$. This is of the form~\eqref{DTS}.
In other words, numerical approaches for solving~\eqref{CTS} can be obtained by
transforming~\eqref{CTS} into the form of~\eqref{DTS}, and then applying some iterative methods to~\eqref{DTS} for the solution~\cite{Chiang2012,Su2010, Wang2007}.
With this in mind, in this note we are interested in the study of
$\top$-Stein matrix equation~\eqref{DTS}.

Observe that if $X$ is a solution of~\eqref{DTS}, then $X$ also
satisfies the following Stein equation
\begin{equation}\label{DS}
X = AB^\top X A^\top B+C+AC^\top B.
\end{equation}
It follows that once there exists an unique solution to~\eqref{DS}, we can usually solve~\eqref{DTS} via the conventional Smith-type iterative methods given in~\cite{Smith68, Zhou2009} to~\eqref{DS}. However, even if \eqref{DTS} is uniquely solvable, it  does not imply~\eqref{DS} is uniquely solvable.
%even if~\eqref{DS} is not unique solvable, it does not imply~\eqref{DTS} is not solvable.
For example, if $b = 1$ and $a = -1$, the scalar equation $x=-x+c$ has an unique solution $x=\dfrac{c}{2}$, but no more information can be obtained from the Stein equation $x=x+c-c$.
%
%%
%In ~\cite{Chiang2012},
%Chiang et al. propose solvability conditions and  a stable numerical
%method of~\eqref{DTS} in terms of the periodic Schur or PQZ
%decomposition~\cite{Bojanczyk1992}.
%%
%
In our work, we formulate the necessary and sufficient conditions
for the existence of the solution of~\eqref{DTS} directly by means
of the spectrum analysis in Section 2. We study the capability of the Smith-type iterative algorithms for solving~\eqref{DTS} in Section 3
and concluding remarks are given in Section 4.

%
% Solutions of the Matrix Equations
%
\section{Solvability conditions of the Matrix Equation $X = AX^\top B+C$}
In order to formalize our discussion, let the notations $A\otimes
B$ be the Kronecker product of matrices $A$ and $B$, $I_n$ be the $n\times n$ identity matrix, and $
\sigma(A)$ be the set of all eigenvalues of $A$.
%\subsection{$\star = \top$}
With the
Kronecker product, \eqref{DTS} can be written as
the enlarged linear system
\begin{align}\label{KronD}
(I_{mn}-(B^\top\otimes A)\mathcal{P}) \mbox{vec}(X)=\mbox{vec}(C),
\end{align}
where $\mbox{vec}(X)$ stacks the columns of $X$ into a column vector
and $\mathcal{P}$ is the Kronecker permutation matrix~\cite{Bernstein2009} which maps
$\mbox{vec}(X)$ into $\mbox{vec}(X^\top)$, i.e., $\mathcal{P}=
\sum\limits_{1\leq i,j\leq mn}e_je_i^\top \otimes e_ie_j^\top$,
where $e_i$ denotes the $i$-th column of the $mn\times mn$ identity
matrix $I_{mn}$. Due to the specific structure of $\mathcal{P}$, it
has been shown in~\cite[Corollary 4.3.10]{Horn1994} that
\begin{equation*}
\mathcal{P}^\top (B^\top\otimes A) \mathcal{P}  = A\otimes B^\top.
\end{equation*}
It then follows that
\begin{equation}\label{kroeig}
 ((B^\top\otimes A) \mathcal{P} )^2= (B^\top\otimes A)\mathcal{P}\mathcal{P}^\top (A\otimes B^\top)   = B^\top A\otimes A B^\top,
\end{equation}
since $\mathcal{P}^2 = I_{mn}$ and $\mathcal{P} = \mathcal{P}^\top$.
Note that $\sigma(A^\top B) = \sigma(B^\top A) = \sigma(AB^\top)$.
By~\eqref{kroeig} and the property of the Kronecker
product~\cite[Theorem 4.8]{zhang1999}, we know that
$\sigma(((B^\top\otimes A) \mathcal{P})^2) = \left \{ \lambda_i
\lambda_j | \lambda_i, \lambda_j \in \sigma(A^\top B) = \left\{
\lambda_1,\ldots,\lambda_n\right\},  1\leq i, j\leq n \right\}$.
That is,  the eigenvalues of $(B^\top\otimes A) \mathcal{P}$ is
related to the square roots of the eigenvalues of $\sigma(A^\top
B)$, but from~\eqref{kroeig}, no more information can be used
to decide the positivity or non-negativity of the eigenvalues of
$(B^\top\otimes A) \mathcal{P}$. A question immediately arises as to
whether it is possible to obtain the explicit expression of the
eigenvalues of $(B^\top\otimes A) \mathcal{P}$, provided the
eigenvalues of $A^\top B$ are given. In the following two Lemmas,
we first show that the generalized inverse of an upper triangular matrix is still upper triangular and then apply it to discuss the eigenvalues of
$(B^\top\otimes A) \mathcal{P}$.

%
%
%
%

%%%%
%%%%start this section by
%%%%recalling the definition of
%%%%
%%%%
%%%%In this section we recall the definition of the Kronecker product and some of its properties which are required in the statements and in the proofs discussed in the following sections.
%%%%
%%%%\subsection{Definitions and Theorems}
%%%%In order to facilitate our discussion, we use the
%%%%
%%%%start by introducing the following two definitions.
%%%%%

%%
%% The Equation $X = AX^\star B+C$
%%
\begin{Lemma}\label{ginverse}
Suppose that $A\in\mathbb{C}^{m\times n} $ is an upper triangular matrix having full rank. Then the generalized inverse of $A$ is an upper triangular matrix.
\end{Lemma}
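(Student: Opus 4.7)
The plan is to split into cases by the shape of $A$ and, in each case, to exhibit an explicit upper-triangular generalized inverse obtained by combining the inverse of an appropriate square upper-triangular block with zero padding.

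First I would dispose of the square case $m = n$, where full rank means $A$ is invertible and the claim reduces to the classical fact that the inverse of an invertible upper-triangular matrix is upper-triangular. The cleanest argument is induction on $n$: writing $A = \bb \alpha & v^\top \\ 0 & A' \eb$ with $A'$ an $(n-1)\times(n-1)$ upper-triangular invertible matrix, the inductive hypothesis gives $(A')^{-1}$ upper-triangular, and the block-inversion identity $A^{-1} = \bb \alpha^{-1} & -\alpha^{-1} v^\top (A')^{-1} \\ 0 & (A')^{-1} \eb$ yields the conclusion.

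For the strictly rectangular case with $m < n$, the upper-triangular shape together with full rank forces the leading $m \times m$ block of $A$ to be invertible and upper-triangular; calling it $T$ and writing $A = [T \mid R]$, I would set $G := \bb T^{-1} \\ 0 \eb \in \mathbb{C}^{n \times m}$ and observe that $AG = I_m$, so $G$ is a right (hence $\{1\}$-) inverse of $A$, and $G$ is upper-triangular by construction. The case $m > n$ is symmetric: upper-triangularity forces the last $m - n$ rows of $A$ to vanish, leaving an invertible upper-triangular block $T$ on top, and $G := [T^{-1} \mid 0]$ is a left inverse that is upper-triangular.

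The main obstacle here is conceptual rather than computational: the statement is \emph{not} true if ``generalized inverse'' is taken to mean the Moore--Penrose pseudoinverse, as a small $2 \times 3$ example already exhibits. The construction above produces a $\{1\}$-inverse (in fact a $\{1,2\}$-inverse) carrying the required triangular structure, and this is exactly what the next lemma on the eigenvalues of $(B^\top \otimes A)\mathcal{P}$ will need. I would therefore make the choice of generalized inverse explicit at the start of the proof, and then verify $AGA = A$ together with upper-triangularity directly from the block forms above.
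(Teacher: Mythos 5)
Your construction is essentially the paper's own: in each case you isolate the invertible square upper-triangular block and pad its inverse with zeros to obtain an upper-triangular one-sided inverse, exactly as the paper does with $\widehat{A}$. Your additional remark that the statement fails for the Moore--Penrose pseudoinverse when $m<n$ (so that ``generalized inverse'' must mean a $\{1\}$- or one-sided inverse, which is all the next lemma uses via $T_2T_2^\dagger=I_n$ and $\widehat{T}_2^\dagger\widehat{T}_2=I_m$) is a correct and worthwhile clarification that the paper's proof glosses over despite writing $A^\dagger$.
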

\begin{proof}
Let
$A =  \bb {\widehat{A}}^\top & 0\eb^\top$, if $m\geq n$, or 
$A = \bb \widehat{A} &*\eb$, if  $m\leq n$. 
%$A = 
%\left\{\begin{array}{cl}
%\bb \widehat{A} \\0\eb  & \mbox{if}\, m\geq n,
%\\
%\bb \widehat{A} &*\eb  & \mbox{if}\, m\leq n,
%\end{array}\right.
%$
Here, $\widehat{A}$ is an upper triangular matrix
with $\mbox{rank}(\widehat{A}) =\mbox{rank}(A)$. Then,
the generalized inverse $A^\dagger$ of $A$
is an $n\times m$ matrix
satisfies
$A^\dagger =
\bb \widehat{A}^{-1} & 0\eb$, if $m\geq n$, or 
$A^\dagger = \bb \widehat{A}^{-T} & 0 \eb^\top$  if$m\leq n$.
%$A^\dagger =
%\left\{\begin{array}{cl}
%\bb \widehat{A}^{-1} & 0\eb  & \mbox{if}\, m\geq n,
%\\
%\bb \widehat{A}^{-1} \\ 0 \eb  & \mbox{if}\, m\leq n.
%\end{array}\right.
%$
This concludes that $A^\dagger$ is upper triangular.

\end{proof}
%In the following, we want obtain the explicit expression
%of the eigenvalues of $(B^\top \otimes A)\mathcal{P}$

%, provided the eigenvalues of are given.
%
%
%
\begin{Lemma}\label{Lem1}
Let $A$ and $B$ be two matrices in $\mathbb{C}^{m\times n}$. Then
\begin{enumerate}
\item There exist unitary matrices $P\in\mathbb{C}^{m\times m}$ and $Q\in\mathbb{C}^{n\times n}$ such that
$U_A := PAQ$ and $U_B := Q^H B^\top P^H$   are two upper triangular
matrices.

\item $(B^\top\otimes A)\mathcal{P} =(Q\otimes P^H)(U_A\otimes U_B)\mathcal{P}(Q^H\otimes P)$

\item $\sigma((B^\top\otimes A)\mathcal{P})=
\left \{\lambda_i , \pm\sqrt{\lambda_i\lambda_j} |
 \lambda_i, \lambda_j \in \sigma(A^\top B)  = \left\{ \lambda_1,\ldots,\lambda_n\right\},
1\leq i <  j\leq n \right\}$.
\end{enumerate}
Here,  $\sqrt{z}$ denotes the principal square root of a complex number $z$.
\end{Lemma}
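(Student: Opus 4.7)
The plan is to establish the three claims in sequence, with the main work concentrated in part (3).

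For part (1), I would invoke the periodic (PQZ) Schur decomposition for the pair $(A, B^\top)$. The standard inductive construction picks an eigenvalue $\lambda_1$ of $AB^\top$ with eigenvector $v$, sets $u' = B^\top v/\|B^\top v\|$ in the generic case, and chooses a unitary $P$ (with $v$ as the first column of $P^H$) and a unitary $Q$ (with $u'$ as the first column) so that both $PAQ$ and $Q^H B^\top P^H$ deflate: their first columns are supported only at the $(1,1)$ entry. One then recurses on the trailing $(n-1)\times(n-1)$ block. The degenerate case $B^\top v = 0$ is handled by an ad hoc choice of initial unitaries, possibly invoking Lemma~\ref{ginverse} to keep the upper-triangular structure intact when rectangular generalized inverses enter the deflation.

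For part (2), I would start from $A = P^H U_A Q^H$ and $B^\top = Q U_B P$ (by unitarity of $P$ and $Q$), apply the mixed-product property of the Kronecker product to get $B^\top\otimes A = (Q\otimes P^H)(U_B\otimes U_A)(P\otimes Q^H)$, and then use the commutation identity $\mathcal{P}(X\otimes Y) = (Y\otimes X)\mathcal{P}$ twice---once to pass $\mathcal{P}$ across $(P\otimes Q^H)$ and once across $(U_B\otimes U_A)$---to arrive at the stated factorization. A by-product is that $(B^\top\otimes A)\mathcal{P}$ is unitarily similar to $(U_A\otimes U_B)\mathcal{P}$ via $Q\otimes P^H$, so the two matrices share the same spectrum and part (3) reduces to a computation in the triangular coordinates.

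For part (3), I would reinterpret $(U_A\otimes U_B)\mathcal{P}$ as the matrix representation of the linear operator $\Phi(X) = U_B X^\top U_A^\top$ on $\mathbb{C}^{n\times n}$. A direct index calculation, exploiting the upper triangularity of $U_A$ and $U_B$, gives
\[
\Phi(E_{ij}) = \sum_{k\leq j,\, l\leq i}(U_B)_{kj}(U_A)_{li}\,E_{kl},
\]
whose leading term is $\alpha_i\beta_j\,E_{ji}$, where $\alpha_i = (U_A)_{ii}$ and $\beta_i = (U_B)_{ii}$ satisfy $\alpha_i\beta_i = \lambda_i$. I would order the standard basis $\{E_{ij}\}$ first by $\max(i,j)$ and then by $\min(i,j)$, grouping $E_{ij}$ with $E_{ji}$ for $i<j$; in this ordering $\Phi$ is represented by a block upper triangular matrix with $1\times 1$ diagonal blocks equal to $\lambda_i$ at each $E_{ii}$ position and $2\times 2$ anti-diagonal blocks, with off-diagonal entries $\alpha_i\beta_j$ and $\alpha_j\beta_i$, on each pair $\{E_{ij},E_{ji}\}$ with $i<j$. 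The eigenvalues of each such $2\times 2$ block are $\pm\sqrt{\alpha_i\alpha_j\beta_i\beta_j} = \pm\sqrt{\lambda_i\lambda_j}$, and collecting all blocks recovers the claimed spectrum.

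The hard part will be verifying the block-triangular structure cleanly: I need to confirm that every ``lower-order'' term $E_{kl}$ appearing in $\Phi(E_{ij})$ sits strictly earlier in the chosen ordering than the pair $\{E_{ij},E_{ji}\}$, so that off-block contributions land strictly below the block diagonal and do not perturb the computed eigenvalues. The case analysis splits into $i<j$, $i=j$, and $i>j$, and in each case the inequalities $k\leq j$, $l\leq i$ must be shown to force $(\max(k,l),\min(k,l))$ to precede $(\max(i,j),\min(i,j))$ whenever $(k,l)\neq(j,i)$.
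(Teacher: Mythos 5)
Your proposal is correct, and parts (2) and (3) follow essentially the same path as the paper: the paper also conjugates by $Q\otimes P^H$ using the commutation property of $\mathcal{P}$, and its part (3) is exactly your block-triangularization, carried out by tracking where the entry $a_{ii}b_{jj}$ of $U_A\otimes U_B$ lands after right-multiplication by $\mathcal{P}$ and then reshuffling by permutations into $1\times 1$ blocks $\lambda_i$ and $2\times 2$ anti-diagonal blocks with entries $a_{ii}b_{jj}$, $a_{jj}b_{ii}$. Your operator formulation $\Phi(X)=U_BX^\top U_A^\top$ with the $(\max,\min)$-lexicographic ordering is arguably cleaner, and the ``hard part'' you flag is in fact easy: $k\leq j$ and $l\leq i$ force $\max(k,l)\leq\max(i,j)$ and $\min(k,l)\leq\min(i,j)$, with both equalities only for $(k,l)\in\{(i,j),(j,i)\}$, so the off-block terms do land strictly earlier. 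The genuine divergence is in part (1). You prove the periodic Schur form by the standard inductive deflation on a pair $(v,u)$ with $Au\in\mathrm{span}(v)$ and $B^\top v\in\mathrm{span}(u)$; the paper instead takes the ordinary Schur form $T_1$ of $AB^\top$, a QR (resp.\ QL) factorization of $B^\top P^H$ (resp.\ $(PA)^H$), and recovers $U_A,U_B$ as $T_1T_2^\dagger$ and $T_2$ via Lemma~\ref{ginverse}, handling rank-deficient $B$ by perturbing to full-rank $B_k$ and passing to a limit with Bolzano--Weierstrass. The trade-off is exactly at your weak spot: the ``ad hoc choice of initial unitaries'' when $B^\top v=0$ is precisely the degenerate situation (rank-deficient or rectangular factors) where the existence of a deflating pair needs a real case analysis, and the paper's QR-plus-limit argument is its device for avoiding that; if you keep the deflation route you should spell out how a deflating pair is produced when $A$ or $B$ is singular, or simply cite the periodic Schur decomposition of Bojanczyk, Golub and Van Dooren and reduce the rectangular case to the square one by padding with zeros.
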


\begin{proof}
Assume first that $B$ is full rank and $P AB^\top P^H=T_1$ 
is the upper schur form of $AB^\top$. 
%Here, $P$ is a unitary matrix and 
%$T_1$ is an upper triangular matrix. 
This defines the unitary matrix $P$ and  one can 
consider the $QR$ and $QL$ decomposition of matrices $B^\top P^H$ and $(PA)^H$, respectively, such that
\begin{align*}
\begin{array}{rcll}
Q^H B^\top P^H &=&T_2,     &\mbox{for}\, m\geq n,\\
\widehat{Q}^H (PA)^H&=&\widehat{T}_2^\top, & \mbox{for}\, m<n,
\end{array}
\end{align*}
where $Q$ and $\widehat{Q}$ are unitary, and $T_2$ and $\widehat{T}_2$ are upper triangular. By Lemma~\ref{ginverse}, there exist two generalized inverses $T_2^\dagger$ and $\widehat{T}_2^\dagger$ of matrices $T_2$ and $\widehat{T}_2$, respectively, such that $T_2 T_2^\dagger = I_n$ and $\widehat{T}_2^\dagger \widehat{T}_2 = I_m$. 
%Here, $I_n$ and $I_m$ are two identity matrices of sizes of $n\times n$ and $m\times n$, respectively. 
In turn, one shall have the following upper triangular matrices: 
\begin{align*}
\begin{array}{lll}
U_A := P A Q=T_1 T_2^{\dagger}, & U_B:=
Q^H B^\top P^H=T_2, & \mbox{for } m\geq n, \\
U_A:=  PA\widehat{Q} = \widehat{T}_2 , & U_B:=\widehat{Q}^H B^\top P^H  = \widehat{T}_2^{\dagger} T_1, & \mbox{for } m< n.
\end{array}
\end{align*}

If $B$ is not full rank, one then considers the SVD decomposition of $B$:
\begin{equation*}
B=U\mbox{diag}(\sigma_1,\cdots, \sigma_r,0,\cdots,0) V^\top,
\end{equation*}
where $U\in\mathbb{C}^{m\times s}$ and $V\in\mathbb{C}^{s\times n}$ are unitary,  $s=\min(m,n)$, and $r=\mbox{rank}(B)$. This then defines a sequence of nonsingular matrices  
\begin{equation*}
B_k:= B+\frac{1}{k} \sum\limits_{i=r+1}^s U_{i}V_{i}^\top, 
\end{equation*}
for any positive integer $k$. It follows from the discussion above that 
for any positive integer $k$, there exist unitary matrices $P_k\in\mathbb{C}^{m\times
m}$ and $Q_k\in\mathbb{C}^{n\times n}$ such that $U_{A_k} :=
P_kAQ_k$ and $U_{B_k} := Q_k^H B_k^\star P_k^H$ are two upper triangular matrices. Since 
$\| P_k\|=\|Q_k\|=1$, $\|U_{A_k}\|_F=\|A\|_F$, and $\|U_{B_k}\|_F=\|B_k\|_F\leq
\|B\|_F+\frac{s-r}{k}$, it follows from the Bolzano --Weierstrass Property~\cite{Rudin1976} that  there exists a convergent subsequence of $(Q_k, P_k, B_k)$ such that its limit is equal to $(Q, P, B)$
for some unitary matrices $Q$ and $P$, which completes the proof of part~1.

From part~1, there exist two unitary matrices $P$ and $Q$ giving rise to
\begin{eqnarray*}
(B^\top\otimes A)\mathcal{P} &=&(Q\otimes P^H)(U_B\otimes U_A)(P\otimes Q^H)\mathcal{P}\\
&=&(Q\otimes P^H)(U_A\otimes U_B)\mathcal{P}(Q^H\otimes P).
\end{eqnarray*}
Thus part~2 holds.

Let the diagonal entries of $U_A$ and $U_B$ be denoted by
$\{a_{ii}\}$ and $\{b_{jj}\}$, respectively. 
Then, $(U_A \otimes
U_B)$ is an upper triangular matrix with given diagonal entries, specified by $a_{ii}$ and $b_{jj}$. Here, one can assume without lose of generality that 
$U_A$ and $U_B$ are $n \times n$ matrices, i.e., $m = n$.
After multiplying $(U_A \otimes U_B)$ with $\mathcal{P}$ from the
right, the position of the entry $a_{ii}b_{jj}$ is changed to be in the $j+n(i-1)$-th row and the $i+n(j-1)$-th column of the matrix
$(U_A\otimes U_B)\mathcal{P}$. They are then reshuffled by a
sequence of permutation matrices to form a block upper triangular
matrix with diagonal entries arranged in the following order
\begin{eqnarray}\label{BigMat}
&&\left\{ a_{11}b_{11}, \bb
                          0 & a_{11}b_{22} \\
                          a_{22}b_{11} & 0
                      \eb,\ldots,
                      \bb
                          0 & a_{11}b_{nn} \\
                          a_{nn}b_{11} & 0
                      \eb,
                      a_{22}b_{22},
                      \bb
                          0 & a_{22}b_{33} \\
                          a_{33}b_{22} & 0
                      \eb,\right. \nonumber
                      \\
                      &&\left.\ldots,
                      \bb
                          0 & a_{nn}b_{22} \\
                          a_{22}b_{nn} & 0
                      \eb,\ldots,
                      \bb
                          0 & a_{n-1,n-1}b_{nn} \\
                          a_{nn}b_{n-1,n-1} & 0
                      \eb,
                      a_{nn}b_{nn}
 \right\}
\end{eqnarray}
%with move $a_{ii}b_{jj}$'s to
% Due to the
%specific structure of $\mathcal{P}$, the matrix $(U_A\otimes
%U_B)\mathcal{P}$ can be rearranged into the following $mn\times mn$
%block upper triangular matrix
%\begin{equation}\label{BigMat}
%\bb
%a_{11}b_{11} & * & &\\
% 0 & 0&a_{11}b_{22} & *\\
%0  & a_{22}b_{11} & 0 &
%\eb
%\end{equation}
%by means of a sequence of permutations which move $a_{ii}b_{jj}$'s to the corresponding positions above.
By~\eqref{BigMat}, it can be seen that
\begin{eqnarray*}
\sigma((B^\top\otimes A)\mathcal{P}) &=& \left\{ a_{ii}b_{ii},
\pm\sqrt{a_{ii}a_{jj}b_{ii}b_{jj}}, 1\leq i, j\leq n
\right \} \\
& = &
\left \{\lambda_i , \pm\sqrt{\lambda_i\lambda_j},
1\leq i, j\leq n
\right\}
\end{eqnarray*}
where $\lambda_i = a_{ii}b_{ii} \in \sigma(A^\top B)$  for $1\leq i
\leq n$. 
Note that if $m\neq n$, the diagonal entries given in~\eqref{BigMat} will remain unchanged, except that some zero diagonal entries will be added or removed from the diagonal.
%\hfill  $\Box$

\end{proof}

Note that when $m =n$, the decomposition given in part~1 of Lemma~\ref{Lem1} is called \emph{the periodic Schur decomposition}~\cite{Bojanczyk1992}.
%\subsection{The Equation $A^\star X + X^\star B = C$}
The following result, providing the unique solvability conditions of
\eqref{DTS},
 is an immediate consequence of Lemma~\ref{Lem1}.
%This Lemma can be used to provide the solvability conditions of
%\eqref{DTS} with $\star = \top$.
%
\begin{Theorem}\label{DTSEXIST}
The $\top$-Stein matrix equation~\eqref{DTS} is
uniquely solvable if and only if the following conditions are
satisfied: for $\lambda\in\sigma(A^\top B)$ and $\lambda\neq -1$,
$\dfrac{1}{\lambda}\not\in\sigma(A^\top B)$;Ê
 $-1$ can be an eigenvalue of the matrix $A^\top B$, but must be simple.
\end{Theorem}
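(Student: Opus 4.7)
The plan is to translate unique solvability of \eqref{DTS} into a spectral condition on $M := (B^\top \otimes A)\mathcal{P}$ via the Kronecker form \eqref{KronD}, and then unpack that condition using Lemma~\ref{Lem1}(3). Precisely, \eqref{DTS} has a unique solution for every right-hand side $C$ if and only if $I_{mn} - M$ is invertible, equivalently $1 \notin \sigma(M)$, so the whole job is to decide, purely in terms of $\sigma(A^\top B)$, when the value $1$ can appear in $\sigma(M)$.

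By Lemma~\ref{Lem1}(3),
\[ \sigma(M) = \{\lambda_i\} \cup \{\, \pm \sqrt{\lambda_i \lambda_j} \,:\, 1 \le i < j \le n \,\}, \]
where $\{\lambda_i\}_{i=1}^n = \sigma(A^\top B)$ is listed with multiplicity. Hence $1 \in \sigma(M)$ iff either some $\lambda_i = 1$, or $\lambda_i \lambda_j = 1$ for some pair $i<j$ (both sign choices in the second family give the same condition, since the principal square root of $1$ is $1$).

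Matching these two failure modes against the theorem's hypotheses is a small case analysis. A diagonal value $\lambda_i = 1$ simply asserts $1 \in \sigma(A^\top B)$, which is the special case $\lambda = 1$ of ``for $\lambda \neq -1$, $1/\lambda \notin \sigma(A^\top B)$''. A pair $i<j$ with $\lambda_i \lambda_j = 1$ and $\lambda_i \neq \lambda_j$ produces some $\lambda \notin \{\pm 1\}$ whose reciprocal also lies in $\sigma(A^\top B)$, again violating that condition. The only remaining subcase is $\lambda_i = \lambda_j$ with $\lambda_i^2 = 1$: the value $\lambda_i = 1$ is already handled, while $\lambda_i = -1$ occurring at two distinct indices forces $-1$ to have algebraic multiplicity at least $2$, which is ruled out precisely by the ``$-1$ must be simple'' clause. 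Conversely, under the two stated conditions, every failure mode is excluded, so $1 \notin \sigma(M)$.

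I expect the main obstacle in presentation to be the bookkeeping around $\lambda = -1$: because the products in Lemma~\ref{Lem1}(3) pair only distinct indices $i<j$, a single diagonal occurrence of $-1$ contributes $-1$ but not $+1$ to $\sigma(M)$, whereas two or more occurrences do contribute $+1$. This asymmetry is exactly what allows the theorem to tolerate $-1$ as a simple eigenvalue of $A^\top B$ while forbidding every other $\lambda$ with $1/\lambda \in \sigma(A^\top B)$.
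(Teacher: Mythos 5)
Your proposal is correct and follows essentially the same route as the paper: reduce unique solvability to $1\notin\sigma((B^\top\otimes A)\mathcal{P})$ via the Kronecker form~\eqref{KronD}, then apply Lemma~\ref{Lem1}(3) to express that condition in terms of $\sigma(A^\top B)$. Your case analysis of the failure modes (in particular the observation that a single occurrence of $-1$ on the diagonal contributes $-1$ but not $+1$ to the spectrum, while a repeated $-1$ contributes $+1$ through the paired square roots) is in fact spelled out more carefully than in the paper's own, rather terse, argument.
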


\begin{proof}
From~\eqref{KronD}, we know that the $\top$-Stein matrix
equation~\eqref{DTS} is  uniquely solvable if and
only if
\begin{equation}\label{SolCond}
1\not\in\sigma((B^\top\otimes A) \mathcal{P}).
\end{equation}
By Lemma~\ref{Lem1},
%
%$\sigma((B^\top\otimes A)\mathcal{P})=
%\left \{\lambda_i , \pm\sqrt{\lambda_i\lambda_j} |
% \lambda_i, \lambda_j \in \sigma(A^\top B) = \left\{ \lambda_1,\ldots,\lambda_n\right\},
%1\leq i, j\leq n
%\right\}$. Therefore, it follows that
%
if $\lambda \in \sigma(A^\top B)$, then $\dfrac{1}{\lambda}
\not\in\sigma(A^\top B)$. 
Otherwise, $1 = \sqrt{\lambda \cdot
\dfrac{1}{\lambda}} \in ((B^\top\otimes A) \mathcal{P})$.
On the other hand, if $-1\in
\sigma(A^\top B)$ and $-1$ is not a simple eigenvalue, then $1\in
\sigma((B^\top\otimes A) \mathcal{P})$. This
verifies~\eqref{SolCond} and the proof of the theorem is complete.
%\hfill  $\Box$
\end{proof}

\section{Smith-type iterative methods}

Originally, Smith-type iterative methods~\cite{Zhou2009} are developed to solve the Stein equation
\begin{equation*}\label{Stein}
X = AXB +C,\quad A\in\mathbb{R}^{m\times m},\, B\in\mathbb{R}^{n\times n},\, C\in\mathbb{R}^{m\times n}.
\end{equation*}
Our main thrust in this section is to extend the Smith-type iterative methods to solve~\eqref{CTS}
by means of the formula~\eqref{DS} and also preserve the convergence properties embedded in the original methods.
Here we summarize the iterative methods as follows.

 \begin{itemize}
%\item {\bf  Smith ($\ell$) iteration}~\cite{Penzl99}. This method
%is in the form of
%\begin{equation}\label{Smitell}
%X_{k+1} = A_\ell X_k B_\ell + C_\ell, \quad X_0 = C_\ell,
%\end{equation}
%where $\ell \geq 1$ is some prescribed positive scalar and
%\begin{eqnarray*}
%A_\ell &=& (AB^\star)^\ell,\\
%B_\ell &=& (A^\star B)^\ell,\\
% C_\ell &=& \sum\limits_{i=0}^{\ell-1} (AB^\star)^i (AC^\star B + C)(A^\star B)^i.
%\end{eqnarray*}
%Following from a direct verification, we see that the iteration~\eqref{Smitell} can also be computed by
%\begin{equation*}
%X_k = \sum\limits_{i=0}^{(k+1)(\ell-1)} (AB^\star)^i (AC^\star B+C) (A^\star B)^i,\quad k\geq 0.
%\end{equation*}

\item{\bf The $r$-Smith iteration methods}:\begin{equation*}
X_{k+1} = \sum\limits_{i=0}^{r-1} A_k^i X_k B_k^i ,
%(AB^\star)^i AC^\star B (A^\star B)^i + \sum\limits_{i=0}^{r-1} (AB^\star)^i C (A^\star B)^i,\quad k\geq 0,
\end{equation*}
where
\begin{eqnarray*}
X_0  =  AC^\top B + C, \quad
A_0 &=& AB^\top,\quad A_{k+1} = A_k^r, \quad k\geq 0 ,\\
B_0 &=& A^\top B,\quad B_{k+1} = B_k^r, \quad k\geq 0.
\end{eqnarray*}

\end{itemize}

It can be seen that
if $\rho(A^\top B)=\rho(A B^\top)<1$, the $r$-Smith iteration converges to the unique solution of~\eqref{DTS}
\begin{equation*}%\label{DTS series}
X=\sum\limits_{i=0}^\infty (AB^\top)^i (AC^\top B+ C) (A^\top B)^i.
\end{equation*}
When $r = 2$, the $r$-Smith iteration is also called {\bf the Smith accelerative iteration}~\cite{Smith68} and it has been shown in~\cite{Zhou2009} that the Smith accelerative iteration performs more effective than any other  $r$-Smith iterations.
One possible drawback of the Smith-type iterative
methods is that it cannot handle the case when the eigenvalue $-1$ of 
$A^\top B$ is simple.
%always handle the case when there exist
%eigenvalues $\lambda, \mu\in\sigma(A^\top B)$ such that
%$\lambda  \mu = -1$.
%$\rho(A^\star B)= 1$.
Based on the solvable conditions given in this work, it is
possible to develop a specific technique working on the particular
case and it is a subject currently under investigation.

%By Theorem~\ref{DTSEXIST},
%if~\eqref{DS} is written as
%%the solvability conditions of~\eqref{DS} can also be obtained
%%with the aid of the Kronecker product, that is, rewrite
%\begin{equation*}
%(I_{mn}-((A^\star B)^\top\otimes (A B^\star))
%\mbox{vec}(X)=\mbox{vec}(C),
%\end{equation*}
%one shall see that
%%From Theorem~\ref{DTSEXIST}, we know that the eigenvalues of $\sigma(B^\top)$
%the Stein matrix
%Equation~\eqref{DS} has a unique solution if and only if $\lambda\mu\neq 1$ for any $\lambda,\mu\in\sigma(A^\star B)$.

\section{Conclusion}
In this paper, we obtain necessary and sufficient conditions for the existence and uniqueness of the solution of the $\top$-Stein equation and find that the solvability conditions between~\eqref{DTS} and~\eqref{DS} are different. 
When the equation~\eqref{DTS} is consistent, we extend the capacity of Smith-type iteration methods from the Stein equation to the $\top$-Stein equation.

%have emerged as a very useful
%paradigm for solving the Stein matrix equation. We have observed the
%capability of applying the iterative methods to the $\star$-Stein
%matrix equation under the assumptions of the solvability
%of~\eqref{DTS}.
%\section*{Acknowledgement}
%
%The authors wish to thank Professor Wen-Wei Lin (National Chiao Tung University) for many interesting and valuable suggestions on the manuscript.

%
%%
\bibliographystyle{elsarticle-num}

%\bibliography{TSylvester_no_url}

\end{document}